\newcommand{\R}[0]{\mathbb R}
\newcommand{\Ds}[0]{\mathcal D}
\newtheorem{Th}{Theorem}[section]
\newtheorem{Lemma}{Lemma}[section]
\newtheorem{Prop}[Lemma]{Proposition}
\begin{document}

\title{On the well-posedness of the hyperelastic rod equation}
\author{H. Inci}

\maketitle

\begin{abstract}
In this paper we consider the hyperelastic rod equation on the Sobolev spaces $H^s(\R)$, $s > 3/2$. Using a geometric approach we show that for any $T > 0$ the corresponding solution map, $u(0) \mapsto u(T)$, is nowhere locally uniformly continuous. The method applies also to the periodic case $H^s(\mathbb T)$, $s > 3/2$.
\end{abstract}

\section{Introduction}\label{section_introduction}

We consider the following family of equations referred to as the hyperelastic rod equation
\begin{equation}\label{hr}
u_t-u_{txx} + 3uu_x = \gamma ( 2 u_x u_{xx} + uu_{xxx}),\quad t \in \R, x \in \R
\end{equation}
where $\gamma \neq 0$. The initial value problem for \eqref{hr} is locally well-posed in the Sobolev spaces $H^s, s > 3/2$ -- see \cite{yin,zhou}. For the corresponding solution map it was shown in \cite{olson} in the periodic case that it has not the property to be uniformly continuous on bounded sets, whereas in \cite{karapetyan} the same was shown for both cases (periodic and non-periodic) with an improvement for the $s$ range. Our aim here is to prove that the solution map for the range $s > 3/2$ has even less regularity. More precisely

\begin{Th}\label{th_nonuniform}
Let $s > 3/2$ and $T > 0$. Denote by $\Phi_T$ the time $T$ solution map of the initial value problem for \eqref{hr} defined on $U_T \subseteq H^s(\R)$. Then
\[
 \Phi_T:U_T \to H^s(\R),\quad u(0) \mapsto u(T)
\]
is nowhere locally uniformly continuous.
\end{Th}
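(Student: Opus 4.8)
\medskip\noindent\textbf{Proof strategy.}\
The plan is to argue geometrically: recast \eqref{hr} as an ODE on the diffeomorphism group, deduce that the associated Lagrangian flow is smooth, and read off the failure of uniform continuity of $\Phi_T$ from the failure of uniform continuity of the composition map on the diffeomorphism group. First I would rewrite \eqref{hr}: applying $(1-\p_x^2)$ one checks that it is equivalent to the nonlocal transport equation
\[
u_t+\gamma\,u u_x=-F(u),\qquad F(u):=\p_x(1-\p_x^2)^{-1}\!\left(\tfrac{3-\gamma}{2}\,u^2+\tfrac{\gamma}{2}\,u_x^2\right).
\]
Let $\D^s=\D^s(\R)$ be the group of orientation preserving $H^s$-diffeomorphisms of $\R$ (a smooth Banach manifold modelled on $H^s$), and along a solution $u$ let $\ph$ be its flow, $\dot\ph=\gamma\,(u\circ\ph)$, $\ph(0)=\id$. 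Writing $R_\psi w:=w\circ\psi$ and $v:=u\circ\ph$, so that $u=R_{\ph^{-1}}v$, the pair $y=(\ph,v)$ solves the autonomous system $\dot y=P(y)$ on $T\D^s=\D^s\times H^s$ with $P(\ph,v)=\big(\gamma v,\,-R_\ph F R_{\ph^{-1}}v\big)$. The decisive step — which I would carry out by the now standard Ebin--Marsden--Kato conjugation technique — is that $\ph\mapsto R_\ph(1-\p_x^2)^{-1}R_{\ph^{-1}}$ and $\ph\mapsto R_\ph\,\p_x(1-\p_x^2)^{-1}R_{\ph^{-1}}$ extend to real-analytic maps $\D^s\to\mathcal L(H^{s-1},H^s)$, because the convolution kernel $\tfrac12 e^{-|x-y|}$ of $(1-\p_x^2)^{-1}$, pulled back by $\ph$, becomes an integral kernel depending analytically on $\ph$ and $\ph'$; the hypothesis $s>3/2$ enters here through the algebra property of $H^{s-1}$. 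Consequently $P$ is real-analytic on $T\D^s$ — \emph{no derivatives are lost in Lagrangian coordinates} — and by Picard--Lindel\"of in Banach spaces the local flow $\Psi_t$ of $\dot y=P(y)$ is real-analytic in $(t,y)$ on its domain. In particular, for every $T>0$, $\Psi_T$ is a real-analytic local diffeomorphism of $T\D^s$, and, writing $\Psi_T(\id,u)=(\ph^u(T),v^u(T))$, it is defined exactly on $\{\id\}\times U_T$.

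Next comes the factorization. With $\iota:H^s\to T\D^s$, $\iota(u)=(\id,u)$, and $\mathrm{ev}:T\D^s\to H^s$, $\mathrm{ev}(\ph,v)=v\circ\ph^{-1}=R_{\ph^{-1}}v$, one has $\Phi_T=\mathrm{ev}\circ\Psi_T\circ\iota$ on $U_T$. Since $\iota$ is a real-analytic embedding and $\Psi_T$ is a local analytic diffeomorphism, $\Psi_T\circ\iota$ is a bi-Lipschitz embedding onto a submanifold $\mathcal S$ near any base point; hence, fixing $u_0\in U_T$ and putting $(\ph_T,v_T):=\Psi_T(\id,u_0)$, the map $\Phi_T$ is locally uniformly continuous at $u_0$ if and only if $\mathrm{ev}|_{\mathcal S}$ is locally uniformly continuous at $(\ph_T,v_T)$, so all the lost regularity of $\Phi_T$ sits in $\mathrm{ev}$, equivalently in the composition map $\mu:\D^s\times H^s\to H^s$, $\mu(\ph,w)=R_\ph w$. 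I would record the standard facts: $\mu$ is continuous, $w\mapsto R_\ph w$ is a toplinear isomorphism of $H^s$ for fixed $\ph$, and for fixed $w\in H^s$ the map $\ph\mapsto R_\ph w$ is uniformly continuous on bounded subsets of $\D^s$; but $\mu$ itself is not locally uniformly continuous, the quantitative witness (on $\R$; verbatim on $\mathbb T$) being a fixed cut-off $\chi$, the bounded but non-convergent amplitudes $w_N:=N^{-s}\cos(Nx)\,\chi(x)$, and $\ph_N:=\id$, $\tilde\ph_N:=\id+N^{-1}\sin(x)\,\chi(x)$, for which $\|\ph_N-\tilde\ph_N\|_{H^s}=O(N^{-1})\to0$ and $\|w_N\|_{H^s}=O(1)$ while $\|R_{\ph_N}w_N-R_{\tilde\ph_N}w_N\|_{H^s}\ge c_0>0$, an $O(1)$ phase being inserted at frequency $N$.

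To finish I would realise bad pairs on $\mathcal S$ by perturbing the initial datum. For $N$ large and $\varepsilon_0>0$ small put $u_N:=u_0+h_N$ and $\tilde u_N:=u_0+h_N+r_N$, where $h_N$ is a frequency-$N$ wave packet of amplitude $\varepsilon_0 N^{-s}$ (so $\|h_N\|_{H^s}\asymp\varepsilon_0$ and $(h_N)$ does not converge) and $r_N:=N^{-1}\rho$ for a fixed low-frequency bump $\rho$ (so $\|r_N\|_{H^s}=O(N^{-1})\to0$); then $u_N,\tilde u_N$ lie in a small ball $V\ni u_0$ and $\|u_N-\tilde u_N\|_{H^s}\to0$. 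Using the $C^1$-bound for $\Psi_T\circ\iota$ with remainder controlled uniformly on $V$, $v^{\tilde u_N}(T)=v^{u_N}(T)+O(N^{-1})$ and $\ph^{\tilde u_N}(T)=\ph^{u_N}(T)+N^{-1}\,d\ph_T\rho+O(N^{-2})$. Writing $\psi_1,\psi_2$ for the inverses of these diffeomorphisms — a bounded family in $\D^s$ with $\psi_1-\psi_2$ of size $O(N^{-1})$ and bounded frequency — and decomposing $v^{u_N}(T)=v_T+q_N+O(\varepsilon_0^2)$ with $q_N:=dv_T h_N$, one obtains
\[
\Phi_T(\tilde u_N)-\Phi_T(u_N)=\big(q_N\circ\psi_1-q_N\circ\psi_2\big)+\big(v_T\circ\psi_1-v_T\circ\psi_2\big)+O(\varepsilon_0^2)+O(N^{-1}).
\]
The middle term tends to $0$ in $H^s$ by uniform continuity of $\ph\mapsto R_\ph v_T$ on bounded sets; the first term is $\ge c_1\varepsilon_0$ for large $N$, because $q_N$ still carries frequency $\asymp N$ with $H^s$-norm $\asymp\varepsilon_0$ (one chooses $h_N$, $\rho$ so that $dv_T$ does not annihilate the packet and $d\ph_T\rho\ne0$) and composing such a function with the two nearby diffeomorphisms inserts an $O(1)$ phase at frequency $\asymp N$, exactly as in the model computation. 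Hence $\|\Phi_T(\tilde u_N)-\Phi_T(u_N)\|_{H^s}\ge\tfrac12 c_1\varepsilon_0$ for $\varepsilon_0$ small and $N$ large, so $\Phi_T|_V$ is not uniformly continuous; since $u_0\in U_T$ was arbitrary, $\Phi_T$ is nowhere locally uniformly continuous. The periodic case runs verbatim with $\D^s(\mathbb T)$ in place of $\D^s(\R)$.

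The step I expect to be the main obstacle is the last one: making precise that the composition-discontinuity survives transport by the smooth Lagrangian flow and pull-back to data near the arbitrary base point $u_0$. Concretely one must control the linearisation $d(\Psi_T\circ\iota)=(d\ph_T,dv_T)$ on high frequencies. The linearised Lagrangian system has coefficients that are bounded pseudolocal operators of order zero, so $d\ph_T$ and $dv_T$ are bounded below on functions of sufficiently high frequency (any ``conjugate-point'' defect being a fixed finite-dimensional, hence low-frequency, subspace) and roughly preserve frequency localisation; this is what makes $q_N=dv_T h_N$ retain frequency $\asymp N$ with $H^s$-norm $\asymp\varepsilon_0$ and $N^{-1}d\ph_T\rho$ a fixed perturbation of size $\asymp N^{-1}$, while the quadratic remainders are absorbed because $\|R_\psi\|_{\mathcal L(H^s)}$ is uniformly bounded on bounded subsets of $\D^s$. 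A secondary, more routine obstacle is the rigorous verification of the analytic-conjugation property underlying the real-analyticity of $P$.
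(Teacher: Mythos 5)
Your geometric skeleton (Lagrangian reformulation, real-analyticity of the flow on $T\D^s$, factorization $\Phi_T=\mathrm{ev}\circ\Psi_T\circ\iota$ so that all irregularity is pushed into the composition map) coincides with the paper's framework, and your rewriting of \eqref{hr} is correct. But the step you yourself flag as the main obstacle is a genuine gap, not a routine verification, and it is exactly the point where the paper does something different. Your lower bound needs two non-degeneracy facts at an \emph{arbitrary} base point $u_0$: (i) $d\ph_T\rho\neq 0$ (indeed bounded away from $0$ on a region met by the wave packet), and (ii) $q_N=dv_T h_N$ retains frequency $\asymp N$ and $H^s$-norm $\asymp\varepsilon_0$. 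For (i), your justification — that the linearized flow is ``bounded below on functions of sufficiently high frequency'' with only a finite-dimensional conjugate-point defect — does not apply, because $\rho$ is by construction a fixed \emph{low-frequency} bump; and the claim itself (a Fredholm/pseudolocality property of $d\ph_T$ and $dv_T$) is a substantive unproven assertion about the variational equation, not a consequence of anything you have established. A priori the $\ph$-component of $d(\Psi_T\circ\iota)$ could annihilate your chosen $\rho$, or $dv_T$ could smooth out or shrink the packet $h_N$; injectivity of $d(\Psi_T\circ\iota)$ (which you do get from $\Psi_T$ being a local diffeomorphism) says nothing about either component separately. For (ii) the same problem recurs: transplanting the model phase-insertion computation from $N^{-s}\cos(Nx)\chi$ to the unknown function $dv_Th_N$ requires precisely the frequency-localization statement you have deferred.

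The paper circumvents both difficulties. First, instead of tracking how a perturbation of the datum propagates through $dv_T$, it proves a conservation-law identity (Lemma \ref{lemma_conserved}) showing that $y(1)=(1-\gamma^{-2}\p_x^2)v(1)$ equals $\bigl(y(0)/\ph_x(1)^2\bigr)\circ\ph(1)^{-1}$ plus a term that is one degree smoother; thus the leading part of the solution map in $H^{s-2}$ is \emph{literally} the transported initial datum, and no microlocal analysis of the linearized flow is needed. Second, the required non-degeneracy of the $\ph$-component is only established on a dense subset $S$ of base points: for $v\in V\cap H^{s+1}$ the map $t\mapsto\bigl(d_{tv}\exp(w)\bigr)(x)$ is analytic and equals $w(x)\neq0$ at $t=0$, so one finds $t_n\uparrow1$ with $\bigl(d_{t_nv}\exp(w)\bigr)(x)\neq0$; density of $S$ then suffices for ``nowhere locally uniformly continuous.'' Third, the witnessing sequences are not high-frequency oscillations but spatially concentrated bumps $w_n$ of constant norm and shrinking support, transported by two diffeomorphisms whose values at $x_0$ are separated by more than the support width, so the two images have disjoint supports and the difference is bounded below by the sum of the norms. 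If you want to complete your route, you must either prove the frequency-propagation and non-degeneracy properties of $d\ph_T$, $dv_T$ (a real piece of analysis of the variational equation), or replace your mechanism by the paper's concentration-plus-dense-subset argument, for which the conservation law is the essential input.
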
   

We will rewrite \eqref{hr} by doing the transformation $v(t,x)=u(t,\gamma x)$. This gives
\[
 v_t - \frac{1}{\gamma^2} v_{txx} + \frac{3}{\gamma} v v_x = \frac{1}{\gamma^2} ( 2v_x v_{xx} + v v_{xxx}) 
\]
or rewritten
\[
 (1-\frac{1}{\gamma^2} \partial_x^2) (v_t + vv_x) = \frac{\gamma-3}{\gamma} v v_x - \frac{1}{\gamma^2} v_x v_{xx}
\]
and equivalently
\begin{equation}\label{hr2}
  v_t + vv_x = (1-\frac{1}{\gamma^2} \partial_x^2)^{-1}(\frac{\gamma-3}{\gamma} v v_x - \frac{1}{\gamma^2} v_x v_{xx})=:B(v,v)
\end{equation}
Note that $B$ is a continuous quadratic form on $H^s(\R)$ for $s > 3/2$. It will be enough to establisch Theorem \ref{th_nonuniform} for $v$ instead of $u$. As will be shown in the next section \eqref{hr2} is convenient for the geometric framework.

\section{The Geometric framework}\label{section_framework}

We will formulate \eqref{hr2} in a geometric way as was done in \cite{b_family}. Consider the flow map of $v$, i.e. 
\[
 \varphi_t(t,x) = v(t,\varphi(t,x)),\quad \varphi(0,x)=x
\]
The functional space for the $\varphi$ variable is for $s > 3/2$ the diffeomorphism group
\[
 \Ds^s(\R)=\{ \varphi:\R \to \R \;|\; \varphi-\mbox{id} \in H^s(\R),\quad \varphi_x(x) > 0 \mbox{ for all } x \in \R \}
\]
where $\mbox{id}$ is the identity map in $\R$. It is a topological group and consits of $C^1$-diffeomorphisms. For details on this space -- see \cite{composition}. We can write \eqref{hr2} in the $\varphi$ variable as
\begin{equation}\label{secondorder}
 \varphi_{tt} = B(\varphi_t \circ \varphi^{-1},\varphi_t \circ \varphi^{-1}) \circ \varphi
\end{equation}
The computations in \cite{b_family} show that right side is a real analytic map
\[
 \Ds^s(\R) \to P_2(H^s(\R);H^s(\R)), \quad \varphi \mapsto [v \mapsto B(v \circ \varphi^{-1},v \circ \varphi^{-1}) \circ \varphi]
\]
where we denote by $P_2(H^s(\R);H^s(\R))$ the space of continuous quadratic forms on $H^s(\R)$ with values in $H^s(\R)$. We can write the second order equation \eqref{secondorder} as a first order equation on the tangent space $T\Ds^s(\R)=\Ds^s(\R) \times H^s(\R)$
\begin{equation}\label{firstorder}
 \partial_t \left( \begin{array}{c} \varphi \\ v \end{array} \right) = \left(\begin{array}{c} v \\ B(v \circ \varphi^{-1},v \circ \varphi^{-1}) \circ \varphi \end{array}\right)
\end{equation}
The quadratic nature of the second component makes it to a so called Spray -- see \cite{lang}. It has in particular an exponential map. To define this map consider the ODE \eqref{firstorder} with initial values $\varphi(0)=\mbox{id}$ and $v(0)=v_0$. Denote by $V \subseteq H^s(\R)$ those initial values $v_0$ for which we have existence beyond time $1$. With this we define
\[
 \exp:V \subseteq H^s(\R) \to \Ds^s(\R),\quad v_0 \mapsto \varphi(1;v_0)
\]
where $\varphi(1;v_0)$ is the time $1$ value of the $\varphi$-component. Because of analytic dependence on initial values $\exp$ is real analytic. Furthermore for any $v_0 \in H^s(\R)$ the curve $\varphi(t)=\exp(t v_0)$ is the $\varphi$-component of the solution to \eqref{firstorder} with initial values $\varphi(0)=\mbox{id}$ and $v(0)=v_0$. In particular the solution exists as long as $tv_0 \in V$.\\
With this we can construct solutions to \eqref{hr2}. So consider \eqref{hr2} with initial condition $v(0)=v_0 \in H^s(\R)$. For $\varphi(t)=\exp(tv_0)$ we define 
\[
 v(t)=\varphi_t(t) \circ \varphi(t)^{-1}
\]
It turns out that $v$ is a solution to \eqref{hr2} -- see \cite{b_family}. By the local well-posedness for ODEs we immediately recover local well-posedness -- see \cite{yin,zhou}.

\begin{Th}
The initial value problem for \eqref{hr} is locally well-posed in the Sobolev spaces $H^s(\R), s > 3/2$.
\end{Th}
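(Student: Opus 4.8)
The plan is to derive local well-posedness for \eqref{hr} (equivalently \eqref{hr2}) directly from the ODE formulation \eqref{firstorder} on the smooth Banach manifold $T\Ds^s(\R) = \Ds^s(\R) \times H^s(\R)$, which is the standard Ebin--Marsden style argument. Since the right-hand side of \eqref{firstorder} has already been identified as a real analytic (in particular, smooth and locally Lipschitz) vector field on $T\Ds^s(\R)$ — the second component being the analytic map $\varphi \mapsto [v \mapsto B(v\circ\varphi^{-1},v\circ\varphi^{-1})\circ\varphi]$ evaluated at $v$, and the first component being the trivial projection $(\varphi,v)\mapsto v$ — the Picard--Lindel\"of theorem on Banach spaces applies. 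Thus for every initial datum $(\id, v_0)$ there is a time $T = T(v_0) > 0$ and a unique $C^1$ (indeed real analytic in $t$) integral curve $t \mapsto (\varphi(t), \varphi_t(t))$ on $[0,T)$, depending real analytically on the initial value $v_0$ in a neighborhood of any given $v_0 \in H^s(\R)$.

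Next I would transfer this back to a statement about $v$. Setting $v(t) = \varphi_t(t)\circ\varphi(t)^{-1}$, one checks, exactly as recalled in the excerpt from \cite{b_family}, that $v(t)$ solves \eqref{hr2} with $v(0) = v_0$; this uses that $\Ds^s(\R)$ is a topological group with $C^1$ diffeomorphisms, so inversion $\varphi \mapsto \varphi^{-1}$ and right composition are continuous and the expression is well-defined in $H^s(\R)$. Conversely, any $H^s$-solution of \eqref{hr2} gives, via its flow map, an integral curve of \eqref{firstorder}, so uniqueness for the ODE yields uniqueness for \eqref{hr2}. Undoing the scaling $v(t,x) = u(t,\gamma x)$ gives the corresponding solution $u$ of \eqref{hr}, and since $x \mapsto \gamma x$ induces a bounded linear isomorphism of $H^s(\R)$ (for fixed $\gamma \neq 0$), all the well-posedness properties carry over verbatim. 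Hence there is an open set $U_T \subseteq H^s(\R)$ of initial data for which the time-$T$ solution exists, and $\Phi_T : U_T \to H^s(\R)$ is well-defined.

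It remains to verify the continuity (indeed continuous dependence) part of well-posedness, i.e. that the solution map $u_0 \mapsto (t \mapsto u(t))$ is continuous from $H^s$ into $C([0,T],H^s)$ on its domain. This follows from the analytic (hence continuous) dependence on initial conditions for the ODE \eqref{firstorder} together with the continuity of the maps involved in $v(t) = \varphi_t(t)\circ\varphi(t)^{-1}$: composition $H^s(\R) \times \Ds^s(\R) \to H^s(\R)$ and inversion $\Ds^s(\R) \to \Ds^s(\R)$ are continuous (see \cite{composition}), so the formula defines a continuous map from the integral-curve data to $v(t)$, uniformly for $t$ in compact time intervals. Combining existence, uniqueness, and continuous dependence gives local well-posedness in the sense of Hadamard.

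The main obstacle — though it is one that is entirely absorbed into the cited results — is that none of this is elementary functional analysis on a fixed Banach space: the phase space $\Ds^s(\R)$ is only a topological group (composition is continuous but not smooth, and left translation loses derivatives), so the smoothness of the spray in \eqref{firstorder} is a genuinely nontrivial fact established in \cite{b_family}, resting on the analyticity of the composition and inversion maps as worked out in \cite{composition}. Once that spray regularity is granted, the proof is a routine invocation of the Banach-space ODE theory, so I would keep the argument short, cite \cite{b_family,composition,lang} for the manifold and spray structure, and simply note that \eqref{firstorder} satisfies the hypotheses of the Picard--Lindel\"of / analytic-dependence theorem.
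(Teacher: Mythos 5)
Your proposal follows exactly the paper's intended argument: the right-hand side of \eqref{firstorder} is a real analytic vector field on $\Ds^s(\R)\times H^s(\R)$ (the key nontrivial input from \cite{b_family} and \cite{composition}), so Banach-space ODE theory gives existence, uniqueness, and continuous dependence, which are then transported back to $v$ via $v(t)=\varphi_t(t)\circ\varphi(t)^{-1}$ and the continuity of composition and inversion. This is the same route the paper takes, only spelled out in more detail than the paper's one-line appeal to local well-posedness for ODEs.
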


\section{Nonuniform dependence}\label{section_nonuniform}

In this section we establish our main result Theorem \ref{th_nonuniform}. As mentioned already it will be enough to prove this for the modified equation \eqref{hr2}. We can further simplify by considering the theorem by just considering the time $T=1$ situation as we have for $v$ a solution to \eqref{hr2} that
\[
 \tilde v(t,x):=\lambda v(\lambda t,x)
\] 
is also a solution to \eqref{hr2}.\\
We proceed as in \cite{b_family}. In \cite{b_family} we used a conserved quantity to establish the result. For equation \eqref{hr2} we have something similar.

\begin{Lemma}\label{lemma_conserved}
Let $s >3/2$. For $v$ a solution to \eqref{hr2} with initial value $v(0)=v_0 \in H^s(\R)$ we have
\begin{equation}\label{conservation}
 \left((1-\frac{1}{\gamma^2}\partial_x^2) v(t)\right) \circ \varphi(t) \cdot \varphi_x(t)^2 = (1-\frac{1}{\gamma^2}\partial_x^2) v_0 + \int_0^t \frac{3\gamma-3}{\gamma} \frac{\varphi_t(s) \varphi_{tx}(s)}{\varphi_x(s)} \;ds
\end{equation}
where $\varphi(t)=\exp(tv_0)$.
\end{Lemma}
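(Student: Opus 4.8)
The idea is to regard $m(t) := \big(1 - \tfrac{1}{\gamma^2}\p_x^2\big)v(t)$ as a momentum‑type variable carried along by the flow $\varphi$, so that \eqref{conservation} is the integrated form of a transport/balance law for it. Write $A := 1 - \tfrac{1}{\gamma^2}\p_x^2$, $m := Av$, and set
\[
 q(t,x) := m\big(t,\varphi(t,x)\big)\,\varphi_x(t,x)^2 .
\]
Since $v(t)\in H^s$ gives only $m(t)\in H^{s-2}$ — a space of distributions when $3/2 < s \le 5/2$ — I would first prove \eqref{conservation} for smooth initial data, where $m$ is a genuine function and $q$ a classical $C^1$ function of $(t,x)$, and then pass to the general case by approximation. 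For the latter step one uses that smooth data are dense in $H^s$ and that $v_0 \mapsto (\varphi,v)$ depends continuously — indeed real analytically, as recalled in Section \ref{section_framework} — on $v_0$ in $C([0,1],\Ds^s\times H^s)$; one also rewrites the left‑hand side of \eqref{conservation} in the manifestly well‑defined form
\[
 q = \varphi_t\,\varphi_x^2 - \tfrac{1}{\gamma^2}\big(\varphi_{txx} - (v_x\circ\varphi)\,\varphi_{xx}\big),
\]
obtained by differentiating $\varphi_t = v\circ\varphi$ twice in $x$, each term of which depends continuously on $(\varphi,v)$ with values in $H^{s-2}$, while the integrand in \eqref{conservation} depends continuously on $(\varphi,v)$ with values in, say, $H^{s-1}$.

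The core is the computation for smooth data. Differentiating $q$ in $t$ and using the flow relations $\varphi_t = v\circ\varphi$ and $\varphi_{tx} = (v_x\circ\varphi)\,\varphi_x$ together with the product and chain rules, one collects terms into
\[
 \p_t q = \varphi_x^2\,\big[(m_t + v m_x + 2 v_x m)\circ\varphi\big],
\]
so everything reduces to evaluating the Eulerian expression $m_t + v m_x + 2 v_x m$. Substituting $v_t = -v v_x + B(v,v)$ from \eqref{hr2}, and using that $A$ commutes with $\p_t$, that $AB(v,v) = \tfrac{\gamma-3}{\gamma}v v_x - \tfrac{1}{\gamma^2}v_x v_{xx}$ by the definition of $B$, and that $A(v v_x) = v v_x - \tfrac{1}{\gamma^2}\big(3 v_x v_{xx} + v v_{xxx}\big)$ (since $\p_x^2(v v_x) = 3 v_x v_{xx} + v v_{xxx}$), one gets $m_t = -\tfrac{3}{\gamma}v v_x + \tfrac{1}{\gamma^2}\big(2 v_x v_{xx} + v v_{xxx}\big)$. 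Adding $v m_x = v v_x - \tfrac{1}{\gamma^2}v v_{xxx}$ and $2 v_x m = 2 v v_x - \tfrac{2}{\gamma^2}v_x v_{xx}$, all the $v_x v_{xx}$ and $v v_{xxx}$ contributions cancel and one is left with
\[
 m_t + v m_x + 2 v_x m = \tfrac{3\gamma-3}{\gamma}\,v v_x ,
\]
which for $\gamma = 1$ is the familiar Camassa–Holm momentum law $m_t + v m_x + 2 v_x m = 0$. Hence $\p_t q = \tfrac{3\gamma-3}{\gamma}\,\varphi_x^2\,(v v_x\circ\varphi)$, and since $v v_x = \tfrac12(v^2)_x$ and $(v^2)_x\circ\varphi = \p_x(\varphi_t^2)/\varphi_x = 2\varphi_t\varphi_{tx}/\varphi_x$, this equals $\tfrac{3\gamma-3}{\gamma}\,\varphi_x\,\varphi_t\,\varphi_{tx}$ — the integrand of \eqref{conservation}, with the factor $\varphi_x$ in the numerator. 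Integrating from $0$ to $t$ and using $\varphi(0) = \id$, $\varphi_x(0) = 1$, so that $q(0) = Av_0$, gives \eqref{conservation}.

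The only genuine obstacle is the low regularity just mentioned: for $s$ close to $3/2$ the momentum $m(t) = Av(t)$ lies in a negative‑order Sobolev space, so the composition $m(t)\circ\varphi(t)$, the product with $\varphi_x^2$, and the $t$‑differentiation of $q$ are only formal as written. This is handled precisely by the approximation scheme above — establish the identity classically for smooth $v_0$, where all these operations are legitimate, then invoke density of smooth data together with the analytic dependence of the flow on the initial condition to pass to general $v_0 \in H^s$, after recasting the left‑hand side of \eqref{conservation} in the $H^{s-2}$‑valued form displayed above (and using lower semicontinuity of the existence time to ensure the approximating solutions are defined on $[0,t]$). Everything else in the argument — the identity for $\p_t q$ and the cancellations producing $\tfrac{3\gamma-3}{\gamma}v v_x$ — is a routine, if somewhat lengthy, calculation.
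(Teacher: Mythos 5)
Your proof is correct and takes essentially the same route as the paper: differentiate the left-hand side of \eqref{conservation} in $t$, use \eqref{hr2} to reduce everything to the transport identity $m_t + v m_x + 2 v_x m = \frac{3\gamma-3}{\gamma} v v_x$ for $m=(1-\frac{1}{\gamma^2}\partial_x^2)v$, integrate from $0$ to $t$ using $\varphi(0)=\id$, and treat the low-regularity range $3/2 < s \leq 5/2$ by approximation with regular solutions. One substantive remark: the integrand $\frac{3\gamma-3}{\gamma}\,\varphi_t \varphi_{tx}\varphi_x$ that your calculation produces is the correct one. The integrand $\frac{3\gamma-3}{\gamma}\,\varphi_t\varphi_{tx}/\varphi_x$ appearing in \eqref{conservation} arises because the paper's own computation drops the factor $\varphi_x^2$ in its final step: one has $\frac{d}{dt}\bigl(((1-\frac{1}{\gamma^2}\partial_x^2)v)\circ\varphi\cdot\varphi_x^2\bigr) = \frac{3\gamma-3}{\gamma}\,(vv_x)\circ\varphi\cdot\varphi_x^2 = \frac{3\gamma-3}{\gamma}\,\varphi_t\varphi_{tx}\varphi_x$, not $\frac{3\gamma-3}{\gamma}\,(vv_x)\circ\varphi$. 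The discrepancy is immaterial for the rest of the argument, since the only properties of the remainder $\Psi$ that are used later are its extra regularity ($H^{s-1}$ rather than $H^{s-2}$) and its smooth dependence on the initial data, and both versions of the integrand have these.
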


The essential thing here is that the ''remainder'' term, the integral term, is more regular then the first term. 

\begin{proof}
We differentiate the expression on the left in the Lemma with respect to $t$ and use the equation for $v$. 
We have by the chain rule
\begin{eqnarray*}
 \frac{d}{dt} (((1-\frac{1}{\gamma^2}\partial_x^2)v)\circ \varphi)&=&((1-\frac{1}{\gamma^2}\partial_x^2)v_t)\circ \varphi + ((1-\frac{1}{\gamma^2}\partial_x^2)v_x)\circ \varphi \cdot \varphi_t\\
&=&\left((1-\frac{1}{\gamma^2}\partial_x^2)v_t + ((1-\frac{1}{\gamma^2}\partial_x^2)v_x) \cdot v\right) \circ \varphi\\
&=& \left((1-\frac{1}{\gamma^2}\partial_x^2)v_t + (1-\frac{1}{\gamma^2}\partial_x^2)(v_x \cdot v) + \frac{3}{\gamma^2} v_x v_{xx}\right) \circ \varphi\\
&=& (\frac{\gamma-3}{\gamma} vv_x + \frac{2}{\gamma^2} v_x v_{xx}) \circ \varphi 
\end{eqnarray*}
where we used equation \eqref{hr2} in the last equality. Therefore we have
\begin{eqnarray*}
\frac{d}{dt} (((1-\frac{1}{\gamma^2}\partial_x^2)v)\circ \varphi \cdot \varphi_x^2) &=& (vv_x + \frac{2}{\gamma^2} v_x v_{xx}) \circ \varphi \cdot \varphi_x^2 +(v-\frac{1}{\gamma^2} v_{xx}) 2 \varphi_x \varphi_{tx} \\ &=& \frac{3\gamma-3}{\gamma} (v v_x) \circ \varphi = \frac{3\gamma-3}{\gamma} \frac{\varphi_t \varphi_{tx}}{\varphi_x}
\end{eqnarray*}
where we used $\varphi_{tx} \varphi_x^{-1} = v_x \circ \varphi$. As $\varphi(0)=\mbox{id}$ integrating gives the result in the case where we work with regular solutions. But as long as $||v_x||_{L^\infty}$ is controlled (similar to the Beale-Majda-Kato criterium) one has continuation of the solution -- see \cite{zhou}. Thus by approximation by regular solutions one has \eqref{conservation} for all $s > 3/2$.
\end{proof}

\noindent
In the following we will use the notation 
\[
 y(t):=(1-\frac{1}{\gamma^2} \partial_x^2) v(t) \mbox{ and } \Psi(t)=\int_0^t \frac{3\gamma-3}{\gamma} \frac{\varphi_t(s) \varphi_{tx}(s)}{\varphi_x(s)} \;ds
\]
Hence from \eqref{conservation}
\[
 y(1) = \left(\frac{y(0)}{\varphi_x(1)^2}\right) \circ \varphi(1)^{-1} + \left(\frac{\Psi(1)}{\varphi_x(1)^2} \right) \circ \varphi(1)^{-1}  
\]
In the following we will use also $\Psi_{v_0}:=\Psi(1)$ for the corresponding initial value $v_0$. Theorem \eqref{th_nonuniform} will follow from

\begin{Prop}\label{prop_nonuniform}
Let $V \subseteq H^s(\R)$ be the domain of definition of $\exp$. We denote by $v(t)$ solutions to \eqref{hr2}. Then the map 
\[
 \Phi:V \subseteq H^s(\R) \to H^s(\R),\quad v(0) \mapsto v(1)
\]
is nowhere locally uniformly continuous.
\end{Prop}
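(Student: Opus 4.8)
The plan is to exploit the geometric structure established above, together with the conservation law of Lemma~\ref{lemma_conserved}, to manufacture pairs of initial data that are close in $H^s$ yet whose time-$1$ evolutions are separated by a fixed positive distance. Concretely, fix $v_0 \in V$ and a small ball around it in $H^s(\R)$; we must produce, for every $\varepsilon > 0$, two initial data $v_0^{(1)}, v_0^{(2)}$ in that ball with $\| v_0^{(1)} - v_0^{(2)}\|_{H^s}$ arbitrarily small but $\| \Phi(v_0^{(1)}) - \Phi(v_0^{(2)})\|_{H^s} \geq c > 0$. The natural device, following \cite{b_family}, is to perturb $v_0$ by a high-frequency wave packet: set $v_0^{(j)} = v_0 + w_j$ where $w_j$ is supported (in frequency) near a large parameter $n$, has $H^s$-norm going to zero, but has $H^{s}$-norm of its highest derivative piece not too small --- the classical trick being that the $H^s$ norm of $\cos(nx)\chi(x)/n^{s}$-type bumps is $O(1)$ while lower-order norms are $O(1/n)$ or smaller, and one arranges the two perturbations to differ in a controlled phase so that their difference is still a genuine high-frequency packet.

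The key steps, in order, would be: (1) Reduce, as the paper already does, to $T=1$ and to equation \eqref{hr2}, and fix an arbitrary $v_0 \in V$ together with a radius $\rho > 0$; the claim is failure of uniform continuity on $B_{H^s}(v_0,\rho) \cap V$. (2) Write $y(1)$ via the displayed formula $y(1) = (y(0)/\varphi_x(1)^2)\circ\varphi(1)^{-1} + (\Psi_{v_0}/\varphi_x(1)^2)\circ\varphi(1)^{-1}$, and observe that $\Psi_{v_0}$ gains one derivative: since $\varphi_t \in H^s$ and $\varphi_{tx}/\varphi_x \in H^{s-1}$, the product $\varphi_t\varphi_{tx}/\varphi_x$ lies in $H^{s-1}$, so $\Psi(1) \in H^{s-1}$ and the remainder term, after composition with the $H^s$-diffeomorphism $\varphi(1)^{-1}$ and multiplication by the $H^{s-1}$-regular factor $1/\varphi_x(1)^2$, is an $H^{s-1}$ quantity --- in particular its contribution to $\| y(1)\|_{H^{s-2}}$ (equivalently to $\| v(1)\|_{H^s}$ through lower modes) is harmless and, more importantly, it is uniformly bounded on bounded sets of initial data, hence contributes nothing to a potential lack of uniform continuity. (3) Therefore the only term that can be responsible for nonuniform dependence is the principal term $(y(0)/\varphi_x(1)^2)\circ\varphi(1)^{-1}$; the strategy is to show this term, as a function of $v_0$, already fails to be uniformly continuous. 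For this, take the two perturbed data $v_0^{(j)}$ as above; the corresponding flows $\varphi^{(j)}(1)$ and Jacobians $\varphi^{(j)}_x(1)$ differ from those of $v_0$, and from each other, by an amount controlled in $C^1$ by the $H^s$ distance of the data (continuity of $\exp$), so $\varphi^{(j)}_x(1)^{-2}$ is essentially the same nonvanishing function for both. The difference $y^{(1)}(1) - y^{(2)}(1)$ is then, to leading order, $\big((y^{(1)}(0) - y^{(2)}(0))/\varphi_x(1)^2\big)\circ\varphi(1)^{-1}$ plus terms that are small in $H^s$, and since $y^{(1)}(0) - y^{(2)}(0) = (1-\gamma^{-2}\partial_x^2)(w_1 - w_2)$ is a high-frequency packet of size $O(1)$ in $H^{s-2}$ (equivalently $w_1 - w_2$ is $O(1)$ in $H^s$ in its top-order part), composition with a fixed $C^1$-diffeomorphism and multiplication by a fixed smooth positive weight does not destroy this, giving $\| v^{(1)}(1) - v^{(2)}(1)\|_{H^s} \geq c > 0$. (4) Finally check that the perturbed data stay in $V$ (use openness of $V$ and smallness of $w_j$) and in the ball $B_{H^s}(v_0,\rho)$, and that $\| v_0^{(1)} - v_0^{(2)}\|_{H^s} \to 0$; combining with step (3) yields nonuniform continuity at $v_0$, and since $v_0$ was arbitrary in $V$, the map $\Phi$ is nowhere locally uniformly continuous.

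The main obstacle I expect is step (3): controlling how composition with $\varphi(1)^{-1}$ and multiplication by $1/\varphi_x(1)^2$ interact with a high-frequency perturbation, specifically showing that these operations neither annihilate the $O(1)$ high-frequency content nor introduce a spurious cancellation between the two solutions. The subtlety is that $\varphi^{(1)}(1)$ and $\varphi^{(2)}(1)$ are genuinely different diffeomorphisms, so $\big(f_1 \circ \psi_1 - f_2 \circ \psi_2\big)$ is not simply $(f_1 - f_2)\circ\psi$; one must split it as $(f_1 - f_2)\circ\psi_1 + \big(f_2\circ\psi_1 - f_2\circ\psi_2\big)$ and argue the second piece is small in $H^s$ using that $\psi_1 - \psi_2$ is small in $H^s$ and $f_2$ enjoys enough regularity, which is delicate precisely because $f_2 = y^{(2)}(0)/\varphi^{(2)}_x(1)^2$ is only in $H^{s-2}$ and $H^{s-2}$ may fail to be an algebra for $s$ near $3/2$. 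The remedy, as in \cite{b_family}, is to work at the level of $v$ rather than $y$ --- i.e. to phrase everything through $\varphi_t(1)\circ\varphi(1)^{-1}$ directly, where the relevant regularity is $H^s$ and the composition estimates on $\Ds^s(\R)$ from \cite{composition} apply cleanly --- and to choose the high-frequency packets $w_j$ with sufficiently lacunary spectrum that the leading-order separation survives all the lower-order error terms. This is the technical heart of the argument; the rest is bookkeeping built on the real-analytic dependence of $\exp$ and the continuity of the group operations on $\Ds^s(\R)$.
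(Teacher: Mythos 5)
There is a genuine gap, and it sits exactly where you place the ``technical heart'': your mechanism for producing the lower bound is inverted relative to the one that actually works here. You propose to make the \emph{difference of the data} a high-frequency packet of size $O(1)$ in its top-order part while treating the two diffeomorphisms as ``essentially the same,'' so that the leading term is $\bigl((y^{(1)}(0)-y^{(2)}(0))/\varphi_x(1)^2\bigr)\circ\varphi(1)^{-1}$. This cannot work: for nonuniform continuity you need $\|v_0^{(1)}-v_0^{(2)}\|_{H^s}\to 0$, hence $y^{(1)}(0)-y^{(2)}(0)\to 0$ in $H^{s-2}$, and composition with a fixed (or uniformly controlled) diffeomorphism is a bounded operator on $H^{s-2}$, so your ``principal term'' tends to zero as well. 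Your steps (3) and (4) are mutually inconsistent on this point. The paper's construction is the opposite: both data contain the \emph{same} bump $w_n$ (smooth, $\|w_n\|_s=R/4$, supported in an interval of radius $r_n/L$ around a point $x_0$), and they differ only by $g_n=g/n\to 0$. The entire effect comes from the piece you propose to show is \emph{small}, namely $f\circ\varphi_n^{-1}-f\circ\widetilde\varphi_n^{-1}$ with $f$ the (fixed-size, concentrated) bump term: the estimate $\|f\circ\varphi_1^{-1}-f\circ\varphi_2^{-1}\|_{s-2}\leq C\|f\|_{s-1}\|\varphi_1^{-1}-\varphi_2^{-1}\|_{s-2}$ needs one extra derivative on $f$, which the $w_n$-terms do not have uniformly (their $H^{s-1}$ norms blow up as the support shrinks), and indeed this piece is bounded \emph{below}: a Taylor expansion of $\exp$ shows $|\varphi_n(x_0)-\widetilde\varphi_n(x_0)|\geq \frac{m}{2n}\|g\|_s\geq 4r_n$, so the two transported bumps have disjoint supports and their difference has $H^{s-2}$-norm comparable to $\|w_n\|_s=R/4$. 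This displacement argument requires choosing $v_0$ in a dense subset $S\subseteq V\cap H^{s+1}$ on which $d_{v_0}\exp(g)(x_0)\neq 0$ for some $g$ and $x_0$ (so that $g_n$ genuinely moves the flow at $x_0$), a step absent from your outline but needed to get the pointwise lower bound on the displacement; density of $S$ then yields ``nowhere.''

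Two smaller but real problems: first, your dismissal of the $\Psi$-remainder because it is ``uniformly bounded on bounded sets'' is a non sequitur --- boundedness of a term does not prevent it from destroying or creating nonuniform continuity; the paper instead shows the \emph{difference} of the remainder terms tends to zero along the chosen sequences, using that $z\mapsto\Psi_z/(\partial_x\exp(z))^2$ is smooth with values in $H^{s-1}$, the extra derivative being exactly what lets one absorb the two different compositions. Second, your proposed remedy of ``working at the level of $v$'' and choosing lacunary spectra is not needed and does not address the inversion above; the paper works throughout with $y$ in the $H^{s-2}$-norm, using the equivalence $\|v\|_s\sim\|y\|_{s-2}$ and the two-sided estimate for $y\mapsto (y/\widetilde\varphi_x^2)\circ\varphi^{-1}$ on $H^{s-2}$.
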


To prove Proposition \ref{prop_nonuniform} we will show that $y(0) \mapsto y(1)$ is nowhere locally uniformly continuous. This is clearly enough. Before doing this we state some facts -- see \cite{b_family} for the proofs.\\
For $\varphi_\bullet \in \Ds^s(\R)$ there is $C > 0$ with
\[
 \frac{1}{C} ||\left(\frac{y}{\widetilde \varphi_x^2}\right) \circ \varphi^{-1}||_{s-2} \leq ||y||_{s-2} \leq C ||\left(\frac{y}{\widetilde \varphi_x^2}\right) \circ \varphi^{-1}||_{s-2}
\]
for all $y \in H^{s-2}(\R)$ and for all $\widetilde \varphi, \varphi$ in some neighborhood of $\varphi_\bullet$.\\
For $\varphi_\bullet \in \Ds^s(\R)$ there is $C > 0$ with
\[
 ||f \circ \varphi_1^{-1} - f \circ \varphi_2^{-1}||_{s-2} \leq C ||f||_{s-1} ||\varphi_1^{-1}-\varphi_2^{-1}||_{s-2} 
\]
for all $f \in H^{s-1}(\R)$ and for all $\varphi_1,\varphi_2$ in a neighborhood of $\varphi_\bullet$.\\
Further we construct a dense subset $S \subseteq V$ with $S \subseteq H^{s+1}(\R)$ and $d_v \exp \neq 0$ for all $v \in S$. Here $d_v \exp$ is the differential. Take an arbitrary $v \in V \cap H^{s+1}(\R)$ and $w \in H^s(\R), x \in \R$ with $w(x) \neq 0$. Consider the analytic map
\[
 \R \to \R,\quad t \mapsto \left(d_{tv}\exp(w)\right)(x)
\]
which at $t=0$ is $w(x)$, in particular nonzero. Thus there is a sequence $t_n \uparrow 1$ with $\left(d_{t_nv}\exp(w)\right)(x) \neq 0$. So putting $t_n v$ to $S$ gives the construction we need.\\
With this preparation we can proceed to the proof of Proposition \ref{prop_nonuniform}. It is essentially the same proof as in \cite{b_family}.

\begin{proof}[Proof of Proposition \ref{prop_nonuniform}]
We take $v_0 \in S \subseteq H^{s+1}(\R)$ in the dense subspace and show that $\Phi$ is not uniformly continuous on any ball $B_R(v_0) \subseteq V$ of radius $R >0$ with center $v_0$. By the construction of $S$ we fix $g \in H^s(\R)$ and $x_0 \in \R$ with 
\[
 \left(d_{v_0} \exp(g)\right)(x_0) > m ||g||_s
\] 
for some $m > 0$. Denote $\varphi_\bullet=\exp(v_0)$. We choose $R_1 > 0$ in such a way that we have
\[
 \frac{1}{C_1} ||\left(\frac{y}{\widetilde \varphi_x^2}\right) \circ \varphi^{-1}||_{s-2} \leq ||y||_{s-2} \leq C_1 ||\left(\frac{y}{\widetilde \varphi_x^2}\right) \circ \varphi^{-1}||_{s-2}
\]
for some $C_1 > 0$ for all $y \in H^{s-2}(\R)$ and $\tilde \varphi, \varphi \in \exp(B_{R_1}(v_0))$. Taking $0 < R_2 \leq R_1$ we can garantuee that
\[
 ||y \circ \varphi^{-1}||_{s-2} \leq C_2 ||y||_{s-2}
\]
for some $C_2$ and for all $y \in H^{s-2}(\R)$ and $\varphi \in \exp(B_{R_2}(v_0))$. Choosing $0 < R_3 \leq R_2$ we can ensure 
\[
 ||f \circ \varphi_1^{-1}-f \circ \varphi_2^{-1}||_{s-2} \leq \tilde C_3 ||f||_{s-1} ||\varphi_1^{-1}-\varphi_2^{-1}||_{s-2} \leq C_3 ||f||_{s-1} ||\varphi_1-\varphi_2||_s
\]
for some $C_3 > 0$ and for all $f \in H^{s-1}(\R)$ and $\varphi_1,\varphi_2 \in \exp(B_{R_3}(v_0))$. Furthermore we denote by $C > 0$ the constant in the Sobolev imbedding
\[
 ||f||_{L^\infty} \leq C ||f||_s
\]
Consider the Taylor expansion for $\exp$
\[
 \exp(w+h) = \exp(w) + d_w \exp(h) + \int_0^1 (1-t) d_{w+th}\exp(h,h) \;dt
\]
We choose $0 < R_4 \leq R_3$ in such a way that we have
\[
 ||d_w^2 \exp(h_1,h_2)||_s \leq K ||h_1||_s ||h_2||_s
\]
and
\[
 ||d_{w_1}^2 \exp(h_1,h_2)-d_{w_2}^2 \exp(h_1,h_2)||_s \leq K ||w_1-w_2||_s ||h_1||_s ||h_2||_s
\]
for some $K > 0$ and for all $w,w_1,w_2 \in \exp(B_{R_4}(v_0))$ and for all $h_1,h_2 \in H^s(\R)$. By taking $0 < R_5 \leq R_4$ small enough we have
\[
 \max\{C \cdot K \cdot R_5,C \cdot K \cdot R_5^2\} < m/2
\]
By the final choice $0 < R_\ast \leq R_5$ we can make
\[
 |\varphi(x)-\varphi(y)| \leq L |x-y| \mbox{ and } ||\Psi_v||_s \leq M \mbox{ and } ||\exp(v)-\exp(\tilde v)||_s \leq L ||v-\tilde v||_s
\]
to hold for all $\varphi \in \exp(B_{R_\ast})$ and $v,\tilde v \in B_{R_\ast}(v_0)$. The goal is to prove that $\Phi$ is not uniformly continuous on $B_R(v_0)$ for any $0 < R \leq R_\ast$. So we fix $0 < R \leq R_\ast$. We define the sequence of radii \[
 r_n = \frac{m}{8n} ||g||_s, \quad n \geq 1
\]
and take arbitrary smooth $w_n$ with support in $(x_0-\frac{r_n}{L},x_0+\frac{r_n}{L})$ and constant mass $||w_n||_s = R/4$. Further we define $g_n=g/n$, which tends to zero in $H^s(\R)$. With this we introduce two sequences
\[
 z_n=v_0 + w_n \quad \mbox{and} \quad \tilde z_n= z_n + g_n=v_0 + w_n + g_n
\]
For $N$ large enough we clearly have $z_n, \tilde z_n \in B_R(v_0)$ for $n \geq N$ and $||z_n-\tilde z_n||_s \to 0$ as $n \to \infty$. Further we introduce the corresponding diffeomorphisms
\[
 \varphi_n=\exp(z_n) \quad \mbox{and} \quad \widetilde \varphi_n=\exp(\tilde z_n)
\]
The result will follow from $\limsup_{n \to \infty} ||\Phi(z_n)-\Phi(\tilde z_n)||_s > 0$. Reexpressing $\Phi$ with \eqref{conservation} and using the notation $y_n=(1-\frac{1}{\gamma^2}\partial_x^2)z_n$ and $\tilde y_n=(1-\frac{1}{\gamma^2}\partial_x^2)\tilde z_n$ and $\Psi_{z_n},\Psi_{\tilde z_n}$ for the ''remainder'' terms this is equivalent to
\[
 \limsup_{n \to \infty} ||\frac{y_n}{(\varphi_n)_x^2} \circ \varphi_n^{-1} - \frac{\tilde y_n}{(\widetilde \varphi_n)_x^2} \circ \widetilde \varphi^{-1} + \frac{\Psi_{z_n}}{(\varphi_n)_x^2} \circ \varphi_n^{-1} - \frac{\Psi_{\tilde z_n}}{(\widetilde \varphi_n)_x^2} \circ \widetilde \varphi_n^{-1}||_{s-2} > 0
\]
As the $\Psi$ terms are more regular than $H^{s-2}$, namely in $H^{s-1}$, we have
\begin{multline*}
 ||\frac{\Psi_{z_n}}{(\varphi_n)_x^2} \circ \varphi_n^{-1} - \frac{\Psi_{\tilde z_n}}{(\widetilde \varphi_n)_x^2} \circ \widetilde \varphi_n^{-1}||_{s-2} \\
 \leq ||\frac{\Psi_{z_n}}{(\varphi_n)_x^2} \circ \varphi_n^{-1} - \frac{\Psi_{z_n}}{(\varphi_n)_x^2} \circ \widetilde \varphi_n^{-1}||_{s-2} + ||\frac{\Psi_{z_n}}{(\varphi_n)_x^2} \circ \widetilde \varphi_n^{-1} - \frac{\Psi_{\tilde z_n}}{(\widetilde \varphi_n)_x^2} \circ \widetilde \varphi_n^{-1}||_{s-2} \\
\leq C_3 ||\frac{\Psi_{z_n}}{(\varphi_n)_x^2} ||_{s-1} ||\varphi_n-\widetilde \varphi_n||_s + C_2 ||\frac{\Psi_{z_n}}{(\varphi_n)_x^2}- \frac{\Psi_{\tilde z_n}}{(\widetilde \varphi_n)_x^2}||_{s-2} \to 0
\end{multline*}
as $n \to \infty$ since $z \mapsto \Psi_z/(\partial_x \exp(z))^2$ is a smooth. Thus it remains to establish
\[
 \limsup_{n \to \infty} ||\frac{y_n}{(\varphi_n)_x^2} \circ \varphi_n^{-1} - \frac{\tilde y_n}{(\widetilde \varphi_n)_x^2} \circ \widetilde \varphi_n^{-1}||_{s-2} > 0
\]
We split 
\[
 y_n = (1-\frac{1}{\gamma^2}\partial_x^2) (v_0 + w_n) \mbox{ resp. } \tilde y_n = (1-\frac{1}{\gamma^2}\partial_x^2)(v_0 + w_n + g_n)
\]
As $v_0 \in H^{s+1}$ we can treat the $v_0$ terms in the same way as the 
$\Psi$ terms and get
\[
 \lim_{n \to \infty} ||\frac{(1-\frac{1}{\gamma^2}\partial_x^2)v_0}{(\varphi_n)_x^2} \circ \varphi_n^{-1} - \frac{(1-\frac{1}{\gamma^2}\partial_x^2)v_0}{(\widetilde \varphi_n)_x^2} \circ \widetilde \varphi_n^{-1}||_{s-2} = 0
\]
For the $g_n$ term we have trivially
\[
 ||\frac{(1-\frac{1}{\gamma^2}\partial_x^2)g_n}{(\widetilde \varphi_n)_x^2} \circ \widetilde \varphi_n^{-1}||_{s-2} \leq C_1 ||(1-\frac{1}{\gamma^2}\partial_x^2)g_n||_{s-2} \to 0
\]
The only remaining thing is to consider
\[
 \limsup_{n \to \infty} ||\frac{(1-\frac{1}{\gamma^2}\partial_x^2)w_n}{(\varphi_n)_x^2} \circ \varphi_n^{-1} - \frac{(1-\frac{1}{\gamma^2}\partial_x^2)w_n}{(\widetilde \varphi_n)_x^2} \circ \widetilde \varphi_n^{-1}||_{s-2} 
\]
In order to estimate this from below we will establish that the two terms have disjoint support. This we do by estimating the distance $|\varphi_n(x_0)-\widetilde \varphi_n(x_0)|$. By the Taylor expansion we have
\[
 \varphi_n = \exp(v_0) + d_{v_0} \exp(w_n) + \int_0^1 (1-t) d_{v_0+tw_n}^2 \exp(w_n,w_n) \;dt
\]
resp.
\[
 \widetilde \varphi_n = \exp(v_0) + d_{v_0} \exp(w_n+g_n) + \int_0^1 (1-t) d_{v_0 + t(w_n+g_n)}^2 \exp(w_n+g_n,w_n+g_n) \;dt
\]
Taking the difference we can write
\[
 \widetilde \varphi_n - \varphi_n = d_{v_0}\exp(g_n) + \mathcal R_1 + \mathcal R_2 + \mathcal R_3
\]
where 
\[
 \mathcal R_1 = \int_0^1 (1-t) (d_{v_0+t(w_n+g_n)}^2(w_n,w_n)-d_{v_0+tw_n}^2(w_n,w_n))\;dt
\]
and
\[
 \mathcal R_2 = \int_0^1 (1-t) d_{v_0+t(w_n+g_n)}^2(g_n,g_n) \;dt
\]
and
\[
 \mathcal R_2 = 2 \int_0^1 (1-t) d_{v_0+t(w_n+g_n)}^2(w_n,g_n) \;dt
\]
For these we have
\[
 ||\mathcal R_1||_\infty \leq C ||\mathcal R_1||_s \leq C K ||g_n||_s ||w_n||_s^2 \leq \frac{1}{n} C K ||g||_s (R/4)^2 \leq \frac{1}{4n} C K R^2 ||g||_s 
\]
and 
\[
 ||\mathcal R_2||_\infty \leq C ||\mathcal R_2||_s \leq 2 C K ||g_n||_s ||w_n||_s \leq \frac{1}{n} C K ||g||_s (R/4) \leq \frac{2}{4n} C K R ||g||_s
\]
and
\[
 ||\mathcal R_3||_\infty \leq C ||\mathcal R_3||_s \leq  C K ||g_n||_s^2 \leq \frac{1}{n} C K ||g||_s (R/4) \leq \frac{1}{4n} C K R ||g||_s
\]
Therefore
\begin{eqnarray*}
 |\varphi_n(x_0)-\widetilde \varphi_n(x_0)| &\geq &|d_{v_0}\exp(g_n)| - ||\mathcal R_1||_\infty - ||\mathcal R_2||_\infty - ||\mathcal R_3||_\infty \\
&\geq& \frac{1}{n} m ||g||_s - \frac{1}{n} \frac{m}{2} ||g||_s = \frac{m}{2n} ||g||_s
\end{eqnarray*}
The support of $\frac{(1-\frac{1}{\gamma^2}\partial_x^2)w_n}{(\varphi_n)_x^2} \circ \varphi_n^{-1}$ is contained in $(\varphi_n(x_0)-r_n,\varphi_n(x_0)+r_n)$ taking into account the lipschitz property of $\varphi_n$ with lipschitz constant $L$ and the definition of $w_n$. Analogously the support of $\frac{(1-\frac{1}{\gamma^2}\partial_x^2)w_n}{(\widetilde \varphi_n)_x^2} \circ \widetilde \varphi_n^{-1}$ is contained in $(\widetilde \varphi_n(x_0)-r_n,\widetilde \varphi_n(x_0)+r_n)$. So we have
\[
 r_n \leq |\varphi_n(x_0)-\tilde \varphi_n(x_0)|/4
\]
This allows us (see \cite{b_family}) to ''separate'' the disjointly supported terms. Thus we have
\begin{multline*}
 \limsup_{n \to \infty} ||\frac{(1-\frac{1}{\gamma^2}\partial_x^2)w_n}{(\varphi_n)_x^2} \circ \varphi_n^{-1}-\frac{(1-\frac{1}{\gamma^2}\partial_x^2)w_n}{(\widetilde \varphi_n)_x^2} \circ \widetilde \varphi_n^{-1}||_{s-2}^2 \\
\geq \limsup_{n \to \infty} \tilde C (||\frac{(1-\frac{1}{\gamma^2}\partial_x^2)w_n}{(\varphi_n)_x^2} \circ \varphi_n^{-1}||_{s-2}^2+||\frac{(1-\frac{1}{\gamma^2}\partial_x^2)w_n}{(\widetilde \varphi_n)_x^2} \circ \widetilde \varphi_n^{-1}||_{s-2}^2)\\
\geq \limsup_{n \to \infty} \tilde C \frac{2}{C^2} ||(1-\frac{1}{\gamma^2}\partial_x^2)w_n||_{s-2}^2 \geq \limsup_{n \to \infty} \tilde K ||w_n||_s^2 = \tilde K R^2/4
\end{multline*}
So for any $R \leq R_\ast$ we have constructed $(z_n)_{n \geq N},(\tilde z_n)_{n \geq N} \subseteq B_R(u_0)$ with $\lim_{n \to \infty} ||z_n-\tilde z_n||_s=0$ and $\limsup_{n \to \infty} ||\Phi(z_n)-\Phi(\tilde z_n)||_s \geq C \cdot R$ for some constant $C > 0$ independent of $R$ showing the claim.

\end{proof}

\bibliographystyle{plain}

\flushleft
\author{ Hasan Inci\\
EPFL SB MATHAA PDE \\
MA C1 627 (B\^atiment MA)\\ 
Station 8 \\
CH-1015 Lausanne\\
Schwitzerland\\
        {\it email: } {hasan.inci@epfl.ch}
}

\end{document}